\newcommand\xC{{\mathbb{C}}}
\newcommand\abs[1]{\left\lvert#1\right\rvert}
\newcommand\dabs[1]{\lvert#1\rvert}
\newcommand\norm[1]{\left\|#1\right\|}
\newcommand\dnorm[1]{\|#1\|}
\DeclareMathOperator*{\argmin}{arg\,min}
\DeclareMathOperator*{\argmax}{arg\,max}
\theoremstyle{thmstyleone}%
\newtheorem{theorem}{Theorem}
\newtheorem{proposition}[theorem]{Proposition}%
\theoremstyle{thmstyletwo}%
\theoremstyle{thmstylethree}%
\begin{document}

\shortauthor{D. Pradovera}
\title{Toward a certified greedy Loewner framework with minimal sampling}


\author*{\fnm{Davide} \sur{Pradovera}}\email{davide.pradovera@univie.ac.at}

\affil{\orgdiv{Department of Mathematics}, \orgname{University of Vienna}, \orgaddress{\street{Oskar-Morgenstern-Platz 1}, \city{1090 Vienna}, \country{Austria}}}

\abstract{We propose a strategy for greedy sampling in the context of non-intrusive interpolation-based surrogate modeling for frequency-domain problems. We rely on a non-intrusive and cheap error indicator to drive the adaptive selection of the high-fidelity samples on which the surrogate is based. We develop a theoretical framework to support our proposed indicator. We also present several practical approaches for the termination criterion that is used to end the greedy sampling iterations. To showcase our greedy strategy, we numerically test it in combination with the well-known Loewner framework. To this effect, we consider several benchmarks, highlighting the effectiveness of our adaptive approach in approximating the transfer function of complex systems from few samples.}

\keywords{Loewner framework, rational approximation, model order reduction, greedy algorithm}

\pacs[MSC Classification]{30D30, 35B30, 41A20, 65D15, 93C80}

\maketitle
\thispagestyle{empty}

\section{Introduction}\label{sec:intro}

Rational approximation techniques have been applied with great success in the field of surrogate modeling for frequency responses of dynamical systems, with notable applications in the synthesis of electric circuits, and in the structural response analysis for mechanical systems. In the field of \emph{model order reduction} (MOR), a plethora of methods and algorithms have been developed with this aim in the last decades, including vector fitting (VF) \cite{VF1,VF2}, the Loewner framework (LF) \cite{LF1,LF2}, RKFIT \cite{RK1}, AAA \cite{AAA}, and minimal rational interpolation (MRI) \cite{MRI}. In such methods, one leverages \emph{a priori} knowledge on the structure of the problem, namely, the rational dependence of the transfer function $H(z)$ on the frequency $z\in\xC$. For this reason, a rational function is introduced as \emph{ansatz} for the surrogate model. Its coefficients (the ``degrees of freedom'' of the surrogate) are then optimized by fitting the surrogate to the available data, usually in the form of transfer function samples: $\{(z_j,H(z_j))\}_{j=1}^S$.

Alternative approaches like reduced basis and balanced truncation have also been shown to be very effective \cite{MOR1,MOR2,MOR3,MOR4}. However, they have the disadvantage of being \emph{intrusive}, i.e., they require access to the matrices defining the dynamical system. This drastically reduces their range of application. For instance, the system matrices are unavailable when closed-source software is used to model the target transfer function. Similarly, intrusive methods cannot be applied when the transfer function is obtained via lab measurements, since, in this case, the system matrices are not even defined. In these settings, non-intrusive rational-approximation-based methods are paramount for obtaining surrogate models for the frequency response.

In order to guarantee an effective and accurate approximation, it is crucial to judiciously choose the number and locations of the sample points $\{z_j\}_{j=1}^S$ used to build the surrogate model. To this aim, it is useful to consider the questions: how many sample points are necessary to attain a desired approximation accuracy? And where should they be placed?

In this work, we look at these questions from a practical ``algorithmic'' viewpoint (thus ignoring ``optimality'' concerns that are more pertinent to information theory), following the well-explored idea of \emph{greedy} MOR \cite{MOR4}. Specifically, assume that a starting set of $S$ sampled frequencies $\{z_j\}_{j=1}^S$ is given. We want to devise a \emph{constructive} strategy for identifying the location of \emph{one} new point $z_{S+1}$, in such a way that the surrogate built from samples at $\{z_j\}_{j=1}^{S+1}$ is (\emph{a priori}) the ``best'' among all possible surrogates built in this way. Otherwise said, given only the available samples, we wish to identify the unexplored frequency that will provide the most information on the not-yet-understood features of the target $H$. We summarize this idea in \cref{algo}, where, for simplicity, we initialize the sample set as a single point.

\begin{algorithm}[htb]
\caption{Generic surrogate modeling with greedy sampling}\label{algo}
\begin{algorithmic}[1]
\State $\textbf{choose}\text{ the first sampled frequency }z_1\text{ and }\textbf{compute }H(z_1)$
\For{$S=1,2,\ldots$}
\State $\textbf{build}\text{ the surrogate }\widetilde{H}\text{ from the available }S\text{ samples}$
\If{$\widetilde{H}\text{ is accurate enough}$}
	\State $\textbf{return }\widetilde{H}$
\EndIf
\State $\textbf{find}\text{ the next point }z_{S+1}\text{ using a greedy criterion}$
\State $\textbf{compute }H(z_{S+1})\text{ and }\textbf{add }z_{S+1}\text{ to the sampled frequencies}$
\EndFor
\end{algorithmic}
\end{algorithm}

In intrusive MOR for frequency-domain problems, the greedy sampling is commonly carried out by placing the next sample point at the location where the current approximation \emph{residual} is largest. However, evaluating residuals is generally impossible in non-intrusive settings, and an alternative criterion for the greedy selection of sample points must be found. In this work, we derive one such criterion in \cref{sec:greedy}. Specifically, with our proposed indicator, we aim at approximating the \emph{relative approximation error}, a quantity that is notoriously difficult to pinpoint. This is especially the case in frequency-domain applications, since, in general, the \emph{inf-sup constant} of the problem is not uniformly lower-bounded due to resonant behavior.

We note that, throughout this paper, we use the term ``adaptivity'' in a way that differs slightly from the customary one in rational approximation. Indeed, methods like RKFIT and AAA (whose first ``A'' actually stands for ``adaptive'') are able to adapt the so-called rational \emph{type} of the surrogate, i.e., essentially, the degree of the approximation. VF has also often been endowed with strategies for adaptive selection of the rational type, which can be summarized as finding a balance between ``interpolation'' and ``generalization'' errors \cite{gVF}. However, all the above-mentioned methods for non-intrusive surrogate modeling assume that a sample set is provided \emph{in advance}, over a grid of sampled frequencies $\{z_j\}_{j=1}^S$ that is \emph{fine enough} for the identification of the relevant features of the transfer function $H$. Building this initial database of $S$ samples can be quite expensive, since it requires many queries of the high-fidelity transfer function $H$. Also, practitioners usually have to \emph{guess} a large \emph{enough} value $S$. In this work, we endeavor to remove this initial ``fine sampling'' phase. Instead, as exemplified by \cref{algo} (and more in line with intrusive greedy MOR), we seek a strategy where the sample set is parsimoniously enriched as $\widetilde{H}$ is built.

The only prior works (that we could find) on comparable approaches for non-intrusive frequency-domain MOR are \cite{gLF1,gLF2}. Also, \cite{gLF3,gLF4} explore similar ideas in intrusive settings. In the above references, the sampling is incremental and constructive. However, the greedy criteria driving the respective algorithms are only heuristic. In contrast, here we endeavor to provide more rigorous strategies, generalizing those developed for MRI in \cite{gMRI,geMRI}.

Before proceeding, we believe it important to mention the issue of noise: the samples $H(z_j)$ might be inexact. This happens whenever the samples come from lab experiments, since they might be affected by measurement error. Also, samples are noisy when approximations (e.g., finite-element discretizations) are introduced in describing the system of interest. This step is necessary when modeling, for instance, distributed electrical devices. In such cases, the noise arises as a ``discretization'' error, which cannot always be neglected \cite{adMRI}. In all these settings, interpolatory approaches might incur \emph{overfitting}, resulting in wildly inaccurate surrogate models, e.g., due to Runge oscillations. More robust least-squares approaches are usually preferred in such cases. However, in any such framework (which includes oversampling as a measure against noise), a ``truly parsimonious'' adaptivity is usually impossible. We refer to \cite{noise} for a discussion on this issue.

\section{Basics of rational MOR}\label{sec:rational}
We target a frequency-domain linear time-invariant descriptor system
\begin{equation}\label{eq:lti}
\begin{cases}
zEX(z)=AX(z)+BU(z)\\
Y(z)=CX(z)
\end{cases}
\end{equation} 
where $E,A\in\xC^{n\times n}$, $B\in\xC^{n\times m}$, and $C\in\xC^{p\times n}$. The frequency $z$ is a complex scalar number, upon which the input $U(z)\in\xC^m$, the state $X(z)\in\xC^n$, and the output $Y(z)\in\xC^p$ depend. For all $z\in\xC$, we define the associated \emph{transfer functions} for state and output, respectively, as
\begin{equation}\label{eq:ltit}
\begin{cases}
G(z)=\Phi(z)B\in\xC^{n\times m}\\
H(z)=C\Phi(z)B\in\xC^{p\times m}
\end{cases} \quad\text{with }\Phi(z)=(zE-A)^{-1}\in\xC^{n\times n}.
\end{equation}
(For conciseness, we refer to $H$ as just ``transfer function''.) The frequency values for which $H$ is unbounded (due to $zE-A$ being singular) are the \emph{poles} or \emph{resonating frequencies} of the system. In this paper, we make use of the symbol $\dnorm{\cdot}$ to denote the Frobenius norm for matrices.

As outlined above, we are interested in building an approximation of $H$ using samples of it at some frequency values: $\{(z_j,H(z_j))\}_{j=1}^S$. Throughout the paper, $S$ denotes the total number of available samples. In applications, the system dimension $n$ is often large, and computing samples of the transfer function $H$ is expensive due to the matrix inversion in $\Phi$. For this reason, we try to keep $S$ as low as possible.

We note that approximations of the state transfer function $G$ are also possible. This case can be simply recovered by setting $C=I_n$, the identity matrix of size $n$. Such setting is sometimes of interest when \cref{eq:lti} is obtained by discretizing a PDE, e.g., the Helmholtz equation. In this case, the state $X$ corresponds to the (discretized) ``solution field'' of the PDE.

In non-intrusive data-driven MOR methods, a rational approximation of the frequency response is built by fitting the sampled values of $H$. Both interpolation-based (e.g., LF \cite{LF1} and MRI \cite{MRI}) and least-squares-based (e.g., VF \cite{VF1}) approaches are available in the literature.

All these approaches start by choosing a suitable format for rational functions. The classical choice involves expanding numerator and denominator in terms of a polynomial basis, e.g., monomials or Legendre polynomials:
\begin{equation}\label{eq:rational}
\widetilde{H}(z)=\frac{\sum_{i=0}^M\widetilde{P}_i\phi_i(z)}{\sum_{i=0}^N\widetilde{q}_i\phi_i(z)},
\end{equation}
for some yet-to-be-determined sets of coefficients $\{\widetilde{P}_i\}_{i=0}^M\subset\xC^{p\times m}$ and $\{\widetilde{q}_i\}_{i=0}^N\subset\xC$. More recently, this format has been (mostly) set aside due to its limited stability properties, in favor of the \emph{barycentric format} \cite{bary,AAA}:
\begin{equation*}
\widetilde{H}(z)=\sum_{j=1}^{\textup{max}(M,N)+1}\frac{P_j}{z-\zeta_j}\Bigg/\sum_{j=1}^{\textup{max}(M,N)+1}\frac{q_j}{z-\zeta_j}.
\end{equation*}
The (distinct) \emph{support} points $\{\zeta_j\}_j\subset\xC$ can be chosen arbitrarily. However, one should be careful about defining the value of the surrogate at such points, since both numerator and denominator are unbounded there. To solve this issue, one needs to eliminate a ``removable discontinuity'':
\begin{equation}\label{eq:baryval}
\widetilde{H}(\zeta_i):=\lim_{z\to\zeta_i}\widetilde{H}(z)=\lim_{z\to\zeta_i}\frac{P_i/(z-\zeta_i)+\cdots}{q_i/(z-\zeta_i)+\cdots}=\frac{P_i}{q_i}.
\end{equation}

This leads to a rational format that is equivalent to the usual one in \cref{eq:rational}. However, the barycentric format is (usually) much more reliable and convenient, since many operations that are numerically unstable can be carried out more safely. For instance, given $S$ samples $\{(z_j,H(z_j))\}_{j=1}^S$, the \emph{interpolatory} barycentric expansion
\begin{equation}\label{eq:baryint}
\widetilde{H}(z)=\sum_{j=1}^S\frac{q_jH(z_j)}{z-z_j}\ \Bigg/\sum_{j=1}^S\frac{q_j}{z-z_j},\quad\text{with }q_j\neq 0\ \forall j\geq 1,
\end{equation}
characterizes \emph{all} rational functions of type\footnote{The type of a rational function is a pair of numbers denoting the degrees of numerator and denominator, respectively. In our presentation, we do not worry about whether the type is ``exact'' or not: it suffices for numerator and denominator to have degrees \emph{smaller than or equal to} those prescribed by the type.} $[S-1/S-1]$ that interpolate the data exactly, cf.~\cref{eq:baryval}. Notably, interpolation is achieved without involving any ill-conditioned Vandermonde matrix. Instead, to achieve interpolation in the barycentric form, one just needs to choose the sample points as support points! We refer to \cite{bary} for more details on the barycentric format.

We restrict our focus to interpolation-based approaches, where all samples are reconstructed exactly: $\widetilde{H}(z_j)=H(z_j)$ for all $j=1,\ldots,S$. Without loss of generality, we can assume that the surrogate $\widetilde{H}$ is represented in interpolatory barycentric form \cref{eq:baryint}. The coefficients $q_1,\ldots,q_S$ are found by imposing additional conditions, which depend on the specific MOR method. We mention here some options:
\begin{itemize}
\item \emph{LF for SISO systems ($p=m=1$).} The barycentric coefficients are found by imposing interpolation conditions at additional frequencies $z'_1,\ldots,z'_{S-1}$. Note that only $S-1$ additional points are necessary, since one of the degrees of freedom of the barycentric coefficients is fixed by prescribing a normalization constraint, usually $\sum_{j=1}^S\dabs{q_j}^2=1$.
\item \emph{LF for MIMO systems ($pm>1$), least-squares LF, and AAA.} The barycentric coefficients are found by solving a (linearized) least-squares fitting problem on a set of test frequencies:
\begin{equation}\label{eq:mimolf}
\textup{minimize }\sum_{l=1}^{S_{\textup{test}}}\norm{\sum_{j=1}^Sq_j\frac{H(z_l')-H(z_j)}{z_l'-z_j}}^2\quad\textup{subject to }\sum_{j=1}^S\abs{q_j}^2=1.
\end{equation}
(Note that the above-mentioned SISO LF is just a special case of this more general setting.) For MIMO LF, alternative strategies based on \emph{tangential interpolation} of $H$ are also available \cite{LF1}.
\item \emph{MRI.} The barycentric coefficients are found by solving a minimization problem involving only the original $S$ sampled frequencies. Specifically,
\begin{equation*}
\textup{minimize }\norm{\sum_{j=1}^Sq_jH(z_j)}^2\quad\textup{subject to }\sum_{j=1}^S\abs{q_j}^2=1.
\end{equation*}
MRI does not require any extra samples in addition to the initial $S$ ones. However, spurious solutions might arise if the size of $H$, namely, $pm$, is too small, since $\sum_{j=1}^Sq_jH(z_j)\equiv 0$ can be trivially satisfied. For this reason, MRI is mostly recommended only for state approximation \cite{MRI}.
\end{itemize}

\subsection{An ideal greedy strategy}\label{sec:ideal}
When approximating the transfer function $H$, an ``ideal'' greedy algorithm is driven by the $\delta$-adjusted relative error
\begin{equation*}
\varepsilon(\widetilde{H},z):=\frac{\dnorm{\widetilde{H}(z)-H(z)}}{\norm{H(z)}+\delta}.
\end{equation*}
Above, $\delta$ is a small number (e.g., $\delta=10^{-8}$) that prevents division by 0 whenever the relative error is computed at a zero of the transfer function. Our discussion mostly generalizes to $\delta=0$ (relative error) and $\delta\to\infty$ (absolute error). See \cref{sec:conclusion}.

In this setting, the next sample point is $z_{S+1}:=\argmax_z\varepsilon(\widetilde{H},z)$, and the termination criterion is
\begin{equation}\label{eq:termination}
\varepsilon_\infty:=\max_z\varepsilon(\widetilde{H},z)<\textup{tol},
\end{equation}
with $\textup{tol}$ being a user-specified tolerance. Note that this is a very idealized strategy: in general, the maximum relative error $\varepsilon_\infty$ is infinite, since the surrogate $\widetilde{H}$ is unbounded near the surrogate poles.

In practice, the continuous maximization over $z$ is replaced by its discrete version over a fine grid of $S'$ \emph{test frequencies}. In this context, the next sample point is chosen among the $S'$ test frequencies, and the termination condition reads: all the $S'$ test values of the error must be below the tolerance.

This approach is extremely expensive, since evaluating the error at all test frequencies requires $S'$ high-fidelity samples of the transfer function. For this reason, in projection-based approaches like reduced basis, it is common to employ residual-based error estimators instead of the actual error.

Such estimators cannot be developed for the transfer function, since no ``residual'' can be defined for $H$. Instead, one commonly considers the state $X$ or its transfer function $G$. Specifically, in this latter case, we define the \emph{relative residual norm} as
\begin{equation}\label{eq:residual}
\rho(\widetilde{G},z):=\frac{\dnorm{(zE-A)\widetilde{G}(z)-B}}{\norm{B}},
\end{equation}
with $\widetilde{G}$ being the current surrogate model for $G$. Note that $\rho(G,\cdot)\equiv 0$. By relying on the so-called (in MOR) \emph{affine} form of the original system \cref{eq:lti}, such residual can be efficiently computed at any frequency $z$, with a cost that is independent of the original system size $n$. See \citep{MOR4} for more details on this.

In a non-intrusive framework, residual-based estimators like \cref{eq:residual} are unavailable: since it might be impossible to gather samples of the state $X$ or of the state transfer function $G$, which are necessary to build $\widetilde{G}$ and to evaluate $\rho$. However, we can still use \cref{eq:residual} to our advantage, as we do in the next section.

\section{Non-intrusive error maximization}\label{sec:greedy}
As showcased in \cref{algo}, the greedy algorithm requires two main ingredients: the identification of the next sample point and the termination criterion check. In this section, we focus on the former. In this context, we have described above how the next sample point should be selected as the location where the approximation error or residual norm is maximal. To make this process non-intrusive, we need to express such error measures only in terms of available quantities. To this effect, we show the following property, which generalizes a similar result presented in \cite{MRI} for state approximation.

\begin{proposition}\label{prop:resequiv}
Consider an interpolatory rational approximation of $G$, namely,
\begin{equation}\label{eq:statesurr}
\widetilde{G}(z)=\sum_{j=1}^S\frac{q_jG(z_j)}{z-z_j}\Bigg/\sum_{j=1}^S\frac{q_j}{z-z_j}.
\end{equation}
There exists a $z$-independent constant $\gamma$ such that the relative residual norm satisfies
\begin{equation}\label{eq:resequiv}
\rho(\widetilde{G},z)=\gamma\abs{\sum_{j=1}^S\frac{q_j}{z-z_j}}^{-1}.
\end{equation}
\end{proposition}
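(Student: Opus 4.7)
The plan is to expand the residual $(zE-A)\widetilde{G}(z)-B$ using the barycentric form \cref{eq:statesurr} and exploit the fact that each sample $G(z_j)=\Phi(z_j)B$ satisfies the defining identity $(z_jE-A)G(z_j)=B$. Writing $d(z):=\sum_{j=1}^S q_j/(z-z_j)$ for the barycentric denominator, the numerator of $\widetilde{G}(z)$ is $\sum_j q_jG(z_j)/(z-z_j)$, so I would first pull $(zE-A)$ inside the sum to obtain
\begin{equation*}
(zE-A)\widetilde{G}(z)=\frac{1}{d(z)}\sum_{j=1}^S\frac{q_j(zE-A)G(z_j)}{z-z_j}.
\end{equation*}

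The crucial trick is to split $zE-A=(z_jE-A)+(z-z_j)E$. Applied to $G(z_j)$, this gives $(zE-A)G(z_j)=B+(z-z_j)EG(z_j)$, which causes the factor $1/(z-z_j)$ to cancel in the ``$E$'' term. After this substitution, the sum splits as
\begin{equation*}
(zE-A)\widetilde{G}(z)=\frac{1}{d(z)}\Bigg(B\sum_{j=1}^S\frac{q_j}{z-z_j}+E\sum_{j=1}^Sq_jG(z_j)\Bigg)=B+\frac{1}{d(z)}E\sum_{j=1}^Sq_jG(z_j),
\end{equation*}
since the first inner sum is exactly $d(z)$. Subtracting $B$ then yields $(zE-A)\widetilde{G}(z)-B=E\sum_j q_jG(z_j)/d(z)$, and taking Frobenius norms and dividing by $\norm{B}$ gives \cref{eq:resequiv} with
\begin{equation*}
\gamma=\frac{\dnorm{E\sum_{j=1}^Sq_jG(z_j)}}{\norm{B}}.
\end{equation*}

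Since the vector $\sum_j q_jG(z_j)$ depends only on the chosen barycentric coefficients and samples, not on $z$, the constant $\gamma$ is manifestly $z$-independent, which is exactly the claim. I do not anticipate any real obstacle here: the argument is an algebraic identity, and the only subtle point is the bookkeeping step $(zE-A)=(z_jE-A)+(z-z_j)E$ that turns the interpolatory structure at the sample points into a cancellation with $d(z)$. The same idea generalizes the observation made for MRI in \cite{MRI}, where $E=I$ and $B=I_n$ render the constant especially simple.
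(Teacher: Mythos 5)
Your proof is correct and follows essentially the same route as the paper: the paper multiplies the residual by the barycentric denominator $Q(z)$ and uses the identical split $zE-A=(z_jE-A)+(z-z_j)E$ together with $(z_jE-A)G(z_j)=B$ to arrive at $Q(z)r(\widetilde{G},z)=E\sum_j q_jG(z_j)$, which is exactly your cancellation written before dividing by $d(z)$. The resulting constant $\gamma=\dnorm{E\sum_j q_jG(z_j)}/\dnorm{B}$ matches the paper's.
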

\begin{proof}
Let $Q(z)=\sum_{j=1}^Sq_j/(z-z_j)$ be the surrogate barycentric denominator, and $r(\widetilde{G},z)=(zE-A)\widetilde{G}(z)-B$ the residual. By multiplying $r$ by $Q(z)$, we obtain
\begin{align*}
Q(z)r(\widetilde{G},z)=&Q(z)\left((zE-A)\sum_{j=1}^S\frac{q_jG(z_j)}{z-z_j}\bigg/Q(z)-B\right)\\
=&\sum_{j=1}^S\frac{q_j(zE-A)G(z_j)}{z-z_j}-BQ(z)=\sum_{j=1}^Sq_j\frac{(zE-A)G(z_j)-B}{z-z_j}.
\end{align*}
Now we add and subtract $z_jEG(z_j)$ from the numerator above and exploit the definition of $G(z_j)$, cf.~\cref{eq:ltit}:
\begin{align}
Q(z)r(\widetilde{G},z)=&\sum_{j=1}^Sq_j\frac{(zE+(z_j-z_j)E-A)G(z_j)-B}{z-z_j}\nonumber\\
=&\sum_{j=1}^Sq_j\frac{(z-z_j)EG(z_j)}{z-z_j}+\sum_{j=1}^Sq_j\frac{(z_jE-A)G(z_j)-B}{z-z_j}\nonumber\\
=&\sum_{j=1}^Sq_jEG(z_j).\label{eq:resvect}
\end{align}
The claim follows by taking the Frobenius norm and dividing by $\dabs{Q(z)}\dnorm{B}$. Specifically, $\gamma$ is the norm of the right-hand-side of \cref{eq:resvect}, divided by $\dnorm{B}$.
\end{proof}

The above property states that the approximation residual for the state transfer function $G$ \emph{equals}, up to a $z$-independent proportionality constant, the inverse of the magnitude of the surrogate barycentric denominator $Q(z)=\sum_{j=1}^Sq_j/(z-z_j)$. Specifically, the residual norm is large if and only if the surrogate denominator $Q$ is small.

Note that, for \cref{prop:resequiv} to hold, we just require the function $G$ to be a rational function of type $[S-1/S-1]$, interpolating the exact $G$ at all the $S$ support points. We can leverage this to obtain some useful insights on the approximation quality of non-intrusive methods.

Indeed, consider the surrogate $\widetilde{H}$ obtained by a generic interpolatory approximation-based method like LF or AAA, and assume that $q_1,\ldots,q_S$ are its barycentric coefficients, cf.~\cref{eq:baryint}. In the barycentric expansion of $\widetilde{H}$, namely, \cref{eq:baryint}, we can formally replace all occurrences of the transfer function $H$ with the state transfer function $G$, leading to a new surrogate $\widetilde{G}$ as in \cref{eq:statesurr}, which is an approximation of $G$. (Note that such operations might be impossible in practice, since samples of $G$ are generally unavailable! However, we are only proceeding formally here.) \cref{prop:resequiv} reveals the exact form of the residual norm \cref{eq:residual} of such new surrogate.

Specifically, assume that the residual norm \cref{eq:residual} is used to drive \cref{algo}. Then \cref{eq:resequiv} suggests that the approximation quality is best improved by adding new sample points at frequencies that \emph{minimize} the magnitude of the denominator $Q(z)$ appearing in the surrogate $\widetilde{G}$. Such denominator \emph{is} computable even in a non-intrusive setting. In fact, it is precisely the same denominator appearing in the surrogate $\widetilde{H}$!

The resulting strategy is summarized by the rule:
\begin{equation}\label{eq:nextptQ}
z_{S+1}:=\argmax_z\rho(\widetilde{G},z)=\argmin_z\abs{\sum_{j=1}^S\frac{q_j}{z-z_j}}.
\end{equation}
We note that, in order to apply \cref{eq:nextptQ}, it is not necessary to compute the scaling constant $\gamma$. This is crucial for non-intrusive methods, since $\gamma$ cannot be estimated non-intrusively.

Before proceeding, we mention that we can employ the standard link between error and residual to obtain a relative \emph{error} bound for $\widetilde{G}$. Specifically, let $\mathcal{K}(\cdot)$ denote the (Euclidean) condition number of a matrix, i.e., the ratio of its largest and smallest singular values. Then,
\begin{equation*}
\frac{\dnorm{\widetilde{G}(z)-G(z)}}{\norm{G(z)}}\leq \gamma\mathcal{K}(zE-A)\abs{\sum_{j=1}^S\frac{q_j}{z-z_j}}^{-1},
\end{equation*}
with $\gamma$ as in \cref{prop:resequiv}. This might be of some interest for state-approximation methods like MRI and reduced basis.

\subsection{From state residual to output error estimation}
A naturally arising question is: does it make sense to drive a greedy algorithm for the \emph{output} transfer function $H$ with a residual estimator for the \emph{state} transfer function $G$? To answer this, we derive an $H$-oriented version of \cref{prop:resequiv}.

\begin{proposition}\label{prop:resequivH}
Let $\widetilde{H}$ be an interpolatory barycentric function as in \cref{eq:baryint}. The relative approximation error satisfies
\begin{equation}\label{eq:resequivH}
\varepsilon(\widetilde{H},z)=\Delta(z)\abs{\sum_{j=1}^S\frac{q_j}{z-z_j}}^{-1}.
\end{equation}
Above, the factor $\Delta$ is
\begin{equation}\label{eq:delta}
\Delta(z):=\frac{\dnorm{C(zE-A)^{-1}\widetilde{B}}}{\norm{C(zE-A)^{-1}B}+\delta}\leq \gamma\frac{\dnorm{(zE-A)^{-1}}_\star}{\norm{C(zE-A)^{-1}B}+\delta},
\end{equation}
with $\widetilde{B}\in\xC^{n\times m}$ and $\gamma>0$ depending only on $q_1,\ldots,q_S$, on $z_1,\ldots,z_S$, and on the system matrices. Also, $\norm{\cdot}_\star$ denotes the spectral norm of a matrix, i.e., its largest singular value.

Additionally, assume that \emph{all} eigenvalues of the pencil $(A,E)$ are poles of $H$, and that their multiplicities as eigenvalues equal their orders as poles, i.e., that $B$ and $C$ are in general position with respect to the generalized eigenspaces of $(A,E)$. Then, $\Delta$ is uniformly bounded from above by a global ($z$-independent) constant $\bar{\Delta}$, depending only on the system matrices and on $\gamma$: $\Delta(z)\leq\bar{\Delta}$ for all $z$.
\end{proposition}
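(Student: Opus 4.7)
The plan is to combine the algebraic identity from \cref{prop:resequiv} with a pole-matching compactness argument on the Riemann sphere. Since the interpolatory barycentric numerator of $\widetilde{H}$ only involves $H$ through the samples $H(z_j)=CG(z_j)$, the output surrogate factors as $\widetilde{H}(z)=C\widetilde{G}(z)$, with $\widetilde{G}$ the formal state surrogate of \cref{eq:statesurr}. Thus $\widetilde{H}(z)-H(z)=C[\widetilde{G}(z)-(zE-A)^{-1}B]$, and I would reuse the identity \cref{eq:resvect}, namely $Q(z)[(zE-A)\widetilde{G}(z)-B]=E\sum_j q_jG(z_j)$, to write
\[
\widetilde{G}(z)-(zE-A)^{-1}B=Q(z)^{-1}(zE-A)^{-1}\widetilde{B},\qquad \widetilde{B}:=E\sum_{j=1}^Sq_jG(z_j).
\]
Multiplying by $C$, taking the Frobenius norm, and dividing by $\norm{H(z)}+\delta$ would then deliver \cref{eq:resequivH} with $\Delta(z)=\dnorm{C(zE-A)^{-1}\widetilde{B}}/(\norm{H(z)}+\delta)$. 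Note that $\widetilde{B}$ depends only on the $q_j$, the $z_j$, and the system matrices, as claimed.

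For the stated upper bound in \cref{eq:delta}, I would apply submultiplicativity of the Frobenius/spectral norms: $\dnorm{C(zE-A)^{-1}\widetilde{B}}\leq\dnorm{(zE-A)^{-1}}_\star\dnorm{C}_\star\dnorm{\widetilde{B}}$, and absorb the $z$-independent factor $\dnorm{C}_\star\dnorm{\widetilde{B}}$ into the constant $\gamma$.

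The real work is establishing the global bound $\bar{\Delta}$. My plan is to view $\Delta$ as a map on $\xC\cup\{\infty\}$ and show it extends continuously there, so that compactness forces boundedness. Away from the eigenvalues of $(A,E)$, both $\dnorm{C(zE-A)^{-1}\widetilde{B}}$ and $\norm{H(z)}$ are continuous and the $+\delta$ keeps the denominator strictly positive, so $\Delta$ is continuous there. At $z=\infty$, the expansion $(zE-A)^{-1}=z^{-1}E^{-1}+O(\abs{z}^{-2})$ (or its analogue after restriction to the proper part of a regular descriptor pencil) shows that numerator and denominator vanish at the same order in $\abs{z}^{-1}$, so $\Delta$ has a finite limit. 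Near an eigenvalue $z_0$ of algebraic multiplicity $k$, the general-position hypothesis forces $(z-z_0)^kH(z)$ to have a \emph{nonzero} limit as $z\to z_0$, while $(z-z_0)^kC(zE-A)^{-1}\widetilde{B}$ is analytic near $z_0$; consequently the $\abs{z-z_0}^{-k}$ blow-up of $\norm{H(z)}$ dominates $\delta$, and $\Delta(z)$ approaches a finite value.

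The main obstacle is precisely this pole-matching step: one must argue that $\widetilde{B}$ cannot excite a state mode that is invisible to $C$, as otherwise the numerator of $\Delta$ would blow up strictly faster than $\norm{H}$ at the corresponding pole. The hypothesis that every eigenvalue of $(A,E)$ is a pole of $H$ with matching order is exactly what rules this out: it guarantees that $C(zE-A)^{-1}\widetilde{B}$ has pole order at most that of $H$ at every eigenvalue of the pencil. Once continuity on the compact space $\xC\cup\{\infty\}$ is established, $\bar{\Delta}:=\sup_z\Delta(z)$ is finite and, by construction, depends only on the system matrices and on $\gamma$ (through $\widetilde{B}$), completing the proof.
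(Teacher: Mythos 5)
Your proposal is correct and takes essentially the same route as the paper: the identity \cref{eq:resequivH} is derived exactly as there, by writing $\widetilde{H}=C\widetilde{G}$ and reusing \cref{eq:resvect} with $\widetilde{B}=E\sum_{j=1}^Sq_jG(z_j)$ and $\gamma=\dnorm{C}\dnorm{\widetilde{B}}$, and the boundedness of $\Delta$ rests on the same two observations (away from the spectrum the denominator is kept positive by $\delta$, and near each eigenvalue the general-position hypothesis matches the pole order of the numerator with that of $H$). Your compactness argument on $\xC\cup\{\infty\}$ is just a slightly more structured packaging of the case analysis that the paper itself only sketches (it explicitly forgoes a rigorous derivation), with the minor caveats that at $z\to\infty$ the denominator tends to $\delta$ rather than vanishing and that a singular $E$ (improper pencil) would need the Weierstrass-form treatment you allude to — neither of which changes the substance.
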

\begin{proof}
First, we define $\widetilde{G}$ as in \cref{eq:statesurr}, by reusing the barycentric coefficients of $\widetilde{H}$. Since $H=CG$ and $\widetilde{H}=C\widetilde{G}$, we have
\begin{align*}
\varepsilon(\widetilde{H},z)=&\frac{\norm{C\left(\widetilde{G}(z)-G(z)\right)}}{\norm{H(z)}+\delta}\\
=&\frac{\norm{C(zE-A)^{-1}\left((zE-A)\widetilde{G}(z)-B\right)}}{\norm{H(z)}+\delta}\\
=&\frac{\norm{C(zE-A)^{-1}r(\widetilde{G},z)}}{\norm{H(z)}+\delta},
\end{align*}
with the residual $r$ defined as in the proof of \cref{prop:resequiv}. The first claim \cref{eq:resequivH} follows by applying \cref{eq:resvect}. Specifically, expression \cref{eq:delta} can be derived by setting
\begin{equation*}
\widetilde{B}=E\sum_{j=1}^Sq_jG(z_j)\quad\text{and}\quad \gamma=\norm{C}\dnorm{\widetilde{B}}.
\end{equation*}

Now it remains to bound $\Delta$ from above. For simplicity, we avoid a rigorous derivation, opting instead to only sketch the remainder of the proof.

We rely on the upper bound in the right-hand-side of \cref{eq:delta}. First, we assume that $z$ is far enough away from the poles of $H$ (i.e., the spectrum of $(A,E)$). Then, (i) the numerator is bounded from above and (ii) the denominator is bounded from below by $\delta$.

Otherwise, assume that $z$ is close to a pole $\lambda$ of $H$, whose order is $\nu$. Then, both numerator and denominator are large, since both have poles at $\lambda$. Specifically, by our assumption, the pole $\lambda$ has order $\nu$ for both numerator and denominator of $\Delta$. As such, both numerator and denominator have a leading term of order $\dabs{z-\lambda}^{-\nu}$:
\begin{equation*}
\Delta(z)\leq \gamma\frac{\dnorm{(zE-A)^{-1}}_\star}{\norm{C(zE-A)^{-1}B}+\delta}\approx \gamma\frac{\alpha/\dabs{z-\lambda}^\nu}{\beta/\dabs{z-\lambda}^\nu+\delta}\leq \gamma\frac{\alpha}{\beta}
\end{equation*}
for some $\alpha,\beta>0$, which are related to the norms of the residues of $\Phi$ (cf.~\cref{eq:ltit}) and of $H$ at $\lambda$. As such, $\Delta$ is uniformly bounded even on neighborhoods of the poles of $H$.
\end{proof}

According to \cref{prop:resequivH}, the \emph{error} for the \emph{output} transfer function behaves, with respect to $z$, almost like the \emph{residual} for the \emph{state} transfer function. The only additional $z$-dependent factor is $\Delta$, which plays the role of ``condition number'' in moving from residual to error. Unfortunately, such term depends on the spectral properties of the system matrices, and cannot be evaluated non-intrusively.

Also, $\Delta$ depends on $z$. As such, in general, the point with the largest error, namely, $z^\star_{S+1}:=\argmax_z\varepsilon(\widetilde{H},z)$, will be different from the point with the largest residual, namely, $z_{S+1}$ in \cref{eq:nextptQ}. However, \cref{prop:resequivH} also states that $\Delta$ is uniformly bounded. As such, we can hope that it does not vary too wildly. Intuitively, this ``hope'' is justified by the ``balanced'' structure of $\Delta$, whose numerator and denominator both contain the resolvent $(zE-A)^{-1}$, as shown in \cref{eq:delta}.

Specifically, if $\Delta$ is well-behaved (namely, if it only varies slightly over the considered frequency range), then we can expect the greedily built sets $\{z^\star_{j+1}\}_{j=1}^S$ and $\{z_{j+1}\}_{j=1}^S$ to be similar. Otherwise stated, we would like the (asymptotic) \emph{distribution} of greedy sample points to be approximately the same with both criteria. We rely on this requirement (stated only informally here) to justify the use of our criterion \cref{eq:nextptQ} to drive the greedy sampling even when the target is minimizing the output approximation error $\varepsilon$.

\subsection{Practical considerations for error maximization}
Locating the next frequency to be sampled (denoted by $z_{S+1}$ in \cref{algo}) is extremely efficient when using \cref{eq:nextptQ}. Indeed, one can easily apply the ``standard'' greedy technique based on a set of test frequencies, already described in \cref{sec:ideal}. To this aim, one chooses a fine set of $S'$ test frequencies \emph{a priori}. Then, at each greedy iteration, one evaluates the reciprocal of the indicator $\dabs{Q}=\dabs{\sum_{j=1}^Sq_j/(\cdot-z_j)}$ at all test frequencies and finds the smallest entry of the corresponding size-$S'$ vector. This is extremely efficient, since evaluating $\dabs{Q}$ only involves scalar operations.

Alternative approaches are also possible, based on an explicit \emph{continuous} minimization of $\dabs{Q}$ on the whole frequency range, as opposed to just on $S'$ discrete test frequencies. For this, it is crucial to observe that the barycentric denominator $Q$ is the ratio of two polynomials, one in Lagrangian form and one in nodal form:
\begin{equation*}
Q(z)=\frac{\sum_{j=1}^Sq_j\prod_{\substack{l=1\\l\neq j}}^S(z-z_l)}{\prod_{j=1}^S(z-z_j)}.
\end{equation*}
Accordingly, $\dabs{Q}$ attains its (global) minima at the roots of the numerator above, as long as they are distinct from the support points. Such roots $\widetilde{\lambda}$ can be efficiently computed by solving a generalized eigenvalue problem of size $S+1$:
\begin{equation}\label{eq:eigenarrow}
\begin{bmatrix}
0 & q_1 & q_2 & \cdots & q_S\\
1& z_1 & & & \\
1 & & z_2 & & \\
\vdots & & & \ddots &\\
1 & & & & z_S
\end{bmatrix}v=\widetilde{\lambda}\begin{bmatrix}
0 & \phantom{q_1} & \phantom{q_2} & \phantom{\cdots} & \phantom{q_S}\\
& 1 & & & \\
& & 1 & & \\
& & & \ddots & \\
& & & & 1
\end{bmatrix}v.
\end{equation}
See \cite{bary} for more details on this.

\section{Greedy termination criteria}\label{sec:term}
In choosing $\dabs{Q}^{-1}$ as indicator, we are neglecting the additional scaling factors $\gamma$ and $\Delta$ in \cref{eq:resequiv} and \cref{eq:resequivH}, respectively. The reason for doing this is that such constants are unavailable within the non-intrusive paradigm, as they depend on the high-fidelity system matrices. Consequently, so far, we have the ability to perform adaptive sampling, but we are unable to decide whether a given surrogate $\widetilde{H}$ is accurate enough, since we cannot estimate the magnitude of the error. As such, we do not yet have a termination criterion for \cref{algo}. In the next sections, we discuss some options for doing this.

\subsection{Maximum number or density of samples}
The most straightforward solution to the above-mentioned problem is simply not having an error-based stopping criterion at all. In this case, one has to stop the greedy iterations only based on the number and location of the sampled frequencies. For instance, one could simply set a maximum number of samples $S_{\textup{max}}$.

An alternative slightly more refined solution relies on setting a bound for the \emph{density} of samples: the algorithm ends when too many samples are taken too close to each other.

\subsection{Look-ahead error estimation}
As already mentioned in \cref{sec:ideal}, the ideal termination criterion \cref{eq:termination} is unfeasible, since it requires evaluating the high-fidelity response $H$ at all frequencies. In the same section, we have also described how the \emph{continuous} maximization can be replaced by a \emph{discrete} one, where the maximum is computed over a finite (but large) set of test frequencies.

As an even cheaper alternative, we can replace the continuous maximization in \cref{eq:termination} with a single point evaluation of the relative error:
\begin{equation}\label{eq:termsingle}
\varepsilon_1:=\varepsilon(\widetilde{H},z')<\textup{tol}.
\end{equation}
Above, $z'$ is some unexplored frequency, which should be carefully chosen so as to provide an insight on the largest approximation error: ideally, $\varepsilon_1\approx\max_z\varepsilon(\widetilde{H},z)$. To this aim, we propose to select $z'$ as the location that maximizes the error indicator $\dabs{Q}^{-1}$, i.e., $z':=z_{S+1}$. By \cref{prop:resequiv,prop:resequivH}, this value of $z'$ is located where the residual is largest, and where the error is heuristically expected to be largest. Note that, since $Q$ changes with $\widetilde{H}$, such test point $z'$ is different at each greedy iteration.

A second purpose is also served by choosing $z'=z_{S+1}$: evaluating the relative error at that point requires the sample $H(z_{S+1})$. This is exactly the sample that is computed anyway, whenever the termination condition is \emph{not} satisfied! This means that, by applying the 1-point criterion \cref{eq:termsingle}, only one expensive sample is wasted, namely, the final one used for testing. In effect, this means that the total ``error estimation'' cost in the greedy algorithm is just 1 high-fidelity sample, independently of the final number $S$ of samples.

\subsection{Look-ahead error estimation with memory}
The main drawback of the previous idea is its lack of robustness. Indeed, we are approximating a max-norm by a single sample, albeit carefully (but heuristically) chosen. Consequently, by using \cref{eq:termsingle}, one incurs the risk of underestimating the maximum approximation error. A simple way of increasing the robustness of this approach is introducing a ``memory'' term: we terminate the greedy sampling loop only if the termination criterion \cref{eq:termsingle} is satisfied by $N_{\textup{memory}}$ greedy iterations in a row, with $N_{\textup{memory}}\geq 2$ fixed in advance. We summarize this idea in \cref{algomemory}, which generalizes \cref{algo}.

\begin{algorithm}[tbh]
\caption{Surrogate modeling with greedy sampling and memory}\label{algomemory}
\begin{algorithmic}[1]
\State $\textbf{choose}\text{ the first sampled frequency }z_1\text{ and }\textbf{compute }H(z_1)$
\State $\textbf{choose}\text{ the memory depth }N_{\textup{memory}}\geq 1\text{ and }\textbf{set }n_{\textup{memory}}=0$\Comment{new!}
\For{$S=1,2,\ldots$}
\State $\textbf{build}\text{ the surrogate }\widetilde{H}\text{ from the available }S\text{ samples}$
\If{$\widetilde{H}\text{ is accurate enough}$}
	\State $\textbf{increase }n_{\textup{memory}}\text{ by }1$\Comment{new!}
	\If{$n_{\textup{memory}}\geq N_{\textup{memory}}$}\Comment{new!}
		\State $\textbf{return }\widetilde{H}$
	\EndIf\Comment{new!}
\Else\Comment{new!}
    \State$\textbf{reset }n_{\textup{memory}}=0$\Comment{new!}
\EndIf
\State $\textbf{find}\text{ the next point }z_{S+1}\text{ using a greedy criterion}$
\State $\textbf{compute }H(z_{S+1})\text{ and }\textbf{add }z_{S+1}\text{ to the sampled frequencies}$
\EndFor
\end{algorithmic}
\end{algorithm}

As we will show numerically, this strategy, despite its simplicity, can be surprisingly effective. Essentially, this approach relies on the idea that, although the indicator $\dabs{Q}^{-1}$ might occasionally poorly identify the location of the largest error, it does not do it persistently: one or two ``bad'' estimations of the error maximum are usually followed by a ``good'' one.

\subsection{Batch look-ahead error estimation}
Another simple way of increasing the robustness of the look-ahead approach is adding more than one sample point to estimate the max-norm of the error:
\begin{equation}\label{eq:termbatch}
\varepsilon_N:=\max_{i=1,\ldots,N}\varepsilon(\widetilde{H},z'_i)<\textup{tol}.
\end{equation}
Again, the test frequencies $z'_1,\ldots,z'_N$ should be carefully placed so as to well identify the largest approximation error. To this aim, one can choose local maxima of the error indicator $\dabs{Q}^{-1}$. As already mentioned, these maxima can be found by solving the low-dimensional root-finding problem \cref{eq:eigenarrow}. Note that, once again, such test points change from one greedy iteration to the next.

At each greedy iteration, this ``batch'' approach for error estimation requires $N$ extra expensive computations of the high-fidelity transfer function $H$. However, we observe that:
\begin{itemize}
\item thanks to \cref{prop:resequivH}, there are reasons to believe that a small or modest value of $N$ should be enough for a robust error estimation, since $\dabs{Q}^{-1}$ should be fairly reliable at estimating the error;
\item the extra expensive computations are relatively few ($\sim SN$), and they affect only the ``offline'' training phase of the surrogate.
\end{itemize}

Moreover, we note that, if the termination criterion \cref{eq:termbatch} is not satisfied (i.e., if the surrogate is still too inaccurate), \cref{algo} adds only a \emph{single} new sample point at $z_{S+1}$. However, a modification of \cref{algo} is possible, where all $N$ test points $z'_1,\ldots,z'_N$ (among which $z_{S+1}$ can also be found) are added to the training set. This comes at no additional computational cost, since their corresponding transfer function values are already available from the testing step. This can potentially remove the extra factor ``$N$'' in the cost of the offline training. For simplicity, we ignore this option here.

\subsection{Randomized error estimation}
The previous approach can be made even more robust, by choosing the test points $z'_1,\ldots,z'_N$ in \cref{eq:termbatch} independently of the surrogate. Specifically, one can simply select the test points randomly, according to some (quasi-)random distribution. This has the advantage of being a (probabilistically) rigorous estimator, with theoretical bounds \emph{in probability} \cite{random}. However, for a reliable error estimation, the number $N$ of test samples will generally be much larger than the one from the $Q$-based ``batch'' method from the previous section: we need enough samples to approximate the max-norm well. This can significantly increase the computational cost of the training procedure. To mitigate this effect, one should fix the $N$ test points once and for all, at the very beginning of the greedy sampling. In this way, the number of expensive test samples is a fixed overhead, which does not scale with $S$.

On top of the additional offline cost, the resulting approach has another drawback: the number $N$ must be chosen \emph{a priori}. At a fundamental level, this goes against the ``adaptivity'' brought forward by the present paper: how can $N$ be chosen well, without (intrusive) knowledge of the underlying system?

\section{Numerical examples}\label{sec:numerical}
In this section, we showcase our approach by applying it to three different MIMO examples from the SLICOT library\footnote{Niconet e.V, \url{http://slicot.org/20-site/126-benchmark-examples-for-model-reduction}.}. Our code is open-source, publicly available at \url{https://github.com/pradovera/greedy-loewner}.

In all our experiments, MIMO LF is used to build the surrogate transfer function $\widetilde{H}$ from the given data $\{(z_j,H(z_j))\}_{j=1}^S$. Specifically, we interpolate full $p\times m$ blocks of $H$ as in \cref{eq:mimolf}, instead of applying the more ``classical'' MIMO LF based on tangential interpolation \cite{LF2}.

In all cases, we use our proposed greedy method from \cref{sec:greedy}: at each greedy iteration, to find $z_{S+1}$, the indicator $\dabs{Q}^{-1}$ is maximized over a fine grid of $10^4$ geometrically spaced test frequencies. We always set $\textup{tol}=10^{-3}$ and $\delta=10^{-8}$.

\subsection{Example \#1: \texttt{MNA\_4}}
We consider the impedance formulation of a 4-port electrical circuit, obtained by modified nodal analysis. This results in a descriptor system whose sizes are $n=980$ and $p=m=4$. We look at a range of imaginary frequencies $z\in[3\cdot 10^4, 3\cdot 10^9]\imath$, with $\imath$ the imaginary unit. The system response is shown in \cref{fig:mna} (top), and, to approximate it, we apply our proposed greedy method. As termination condition, we test two options: look-ahead error estimation and randomized error estimation. For the latter, we take $N=100$ test frequencies $z'_1,\ldots,z'_N$, drawn from a log-uniform distribution on the frequency range. As already mentioned, the random estimator comes with one main computational drawback: the additional overhead of computing the $N$ test samples.

\begin{figure}[tb]
	\centering
	\includegraphics{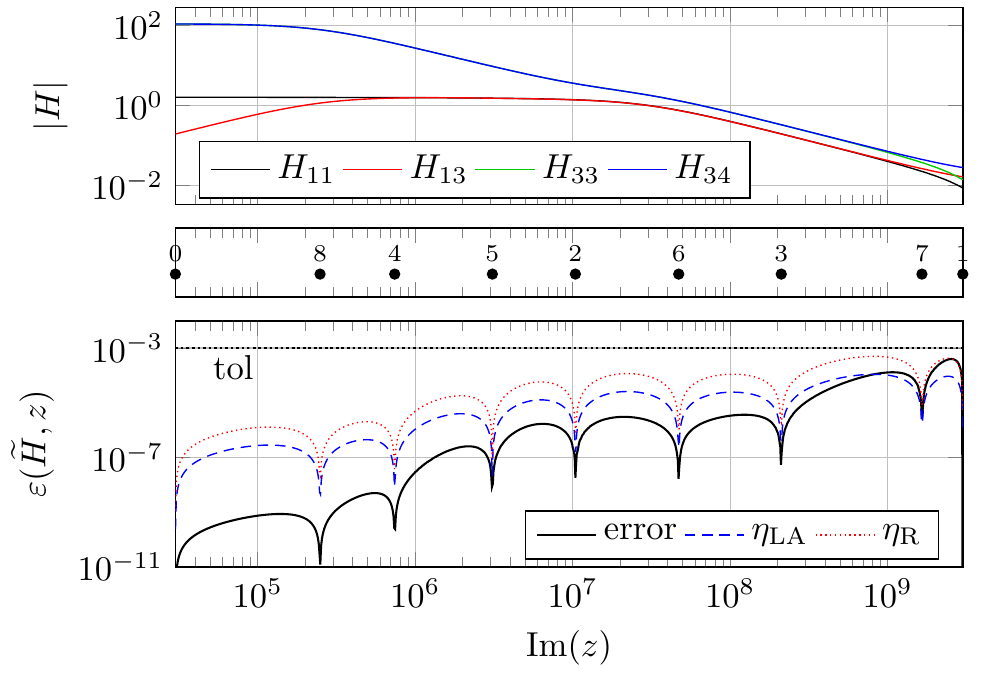}
    \caption{Results for the \texttt{MNA\_4} example. Top plot: some entries of the exact transfer function $H$. Middle plot: adaptively sampled frequencies with their index number (in order of addition). Bottom plot: relative approximation error and error estimators.}
    \label{fig:mna}
\end{figure}

Both approaches converge in 9 greedy iterations. The sampled frequencies are chosen according to the same indicator, so that the surrogates obtained with the two methods are actually identical. Only the error estimates used for the termination condition are different. We do not display the surrogate transfer functions $\widetilde{H}$ in \cref{fig:mna} (top), since they are indistinguishable from the exact $H$. We show the sampled frequencies in \cref{fig:mna} (middle). In \cref{fig:mna} (bottom), we also show the approximation error, which is uniformly below the tolerance.

Moreover, we test how accurate the two methods are at estimating the error. To this aim, we consider the following quantities:
\begin{equation*}
\eta_{\textup{LA}}(z)=\varepsilon_1\abs{\frac{Q(z_{S+1})}{Q(z)}}
\end{equation*}
for the look-ahead method, cf.~\cref{eq:termsingle}, and
\begin{equation}\label{eq:etaR}
\eta_{\textup{R}}(z)=\varepsilon_N\abs{\frac{Q(z'_{j^\star})}{Q(z)}},\quad\text{where }z'_{j^\star}\text{ is the argmax in \cref{eq:termbatch}},
\end{equation}
for the randomized method. We can see that both of them are simply rescaled versions of the indicator $\dabs{Q}^{-1}$. The scaling factors are chosen so that they interpolate the error estimate that each method employs: $\eta_{\textup{LA}}(z_{S+1})=\varepsilon_1$ for the look-ahead method, and $\eta_{\textup{R}}(z'_{j^\star})=\varepsilon_N$ for the randomized method. Effectively, $\eta_{\textup{LA}}$ and $\eta_{\textup{R}}$ are the error \emph{estimators} that the two methods use. (We call them so because, as opposed to \emph{indicators}, they also include information on the error magnitude.)

We compare the estimators with the exact error in \cref{fig:mna} (bottom). We can observe that both approximate the error quite nicely. Specifically, we know from \cref{prop:resequivH} that any discrepancy between error and estimators is due to the factor $\Delta$. We note that $\eta_{\textup{LA}}$ underestimates the error for large frequencies ($z>10^9$). In comparison, $\eta_{\textup{R}}$ is more robust, and never underestimates the error.

As a final remark, we note that the ``look-ahead'' approach was able to properly identify the ``simplicity'' of the system: less than $10$ high-fidelity samples suffice in identifying $H$ to the desired accuracy. The same can be said about the ``randomized'' approach, albeit to a lesser degree, due to the extra overhead related to the $N$ testing samples. In comparison, had a non-adaptive approach like VF or AAA been chosen, one would have probably elected to take (much) more than 10 samples of $H$, incurring a higher training cost.

\subsection{Example \#2: \texttt{tline}}
\begin{figure}[p]
	\centering
	\includegraphics{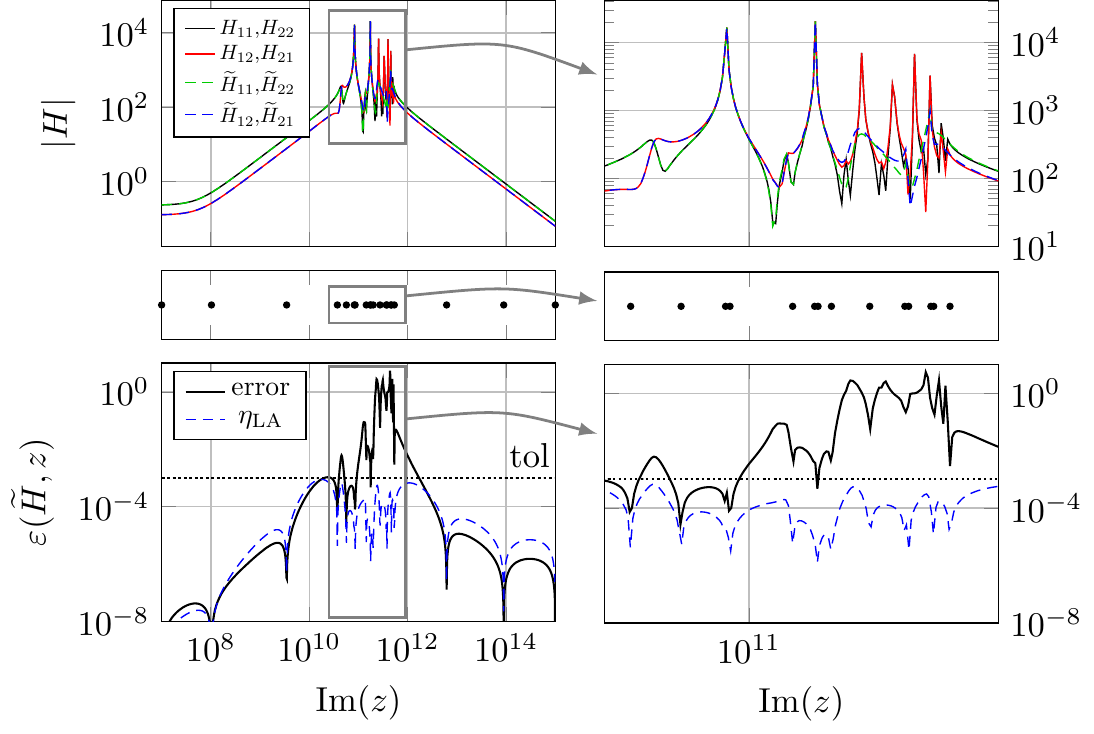}
    \caption{Results for the \texttt{tline} example. Top row: exact transfer function $H$. Middle row: adaptively sampled frequencies. Bottom row: relative approximation error and error estimator. The same results are shown in the two columns of plots: the right figures are zooms on the ``most interesting'' frequencies.}
    \label{fig:tline}
\end{figure}

\begin{figure}[p]
	\centering
	\includegraphics{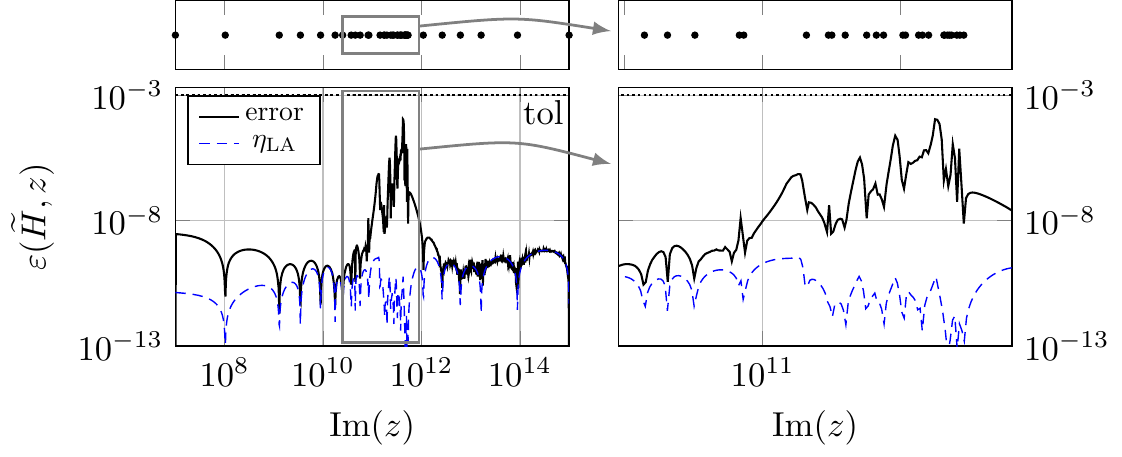}
    \caption{Results for the \texttt{tline} example with memory. Top row: adaptively sampled frequencies. Bottom row: relative approximation error and error estimator. The same results are shown in the two columns of plots: the right figures are zooms on the ``most interesting'' frequencies.}
    \label{fig:tline_x}
\end{figure}

\begin{figure}[p]
	\centering
	\includegraphics{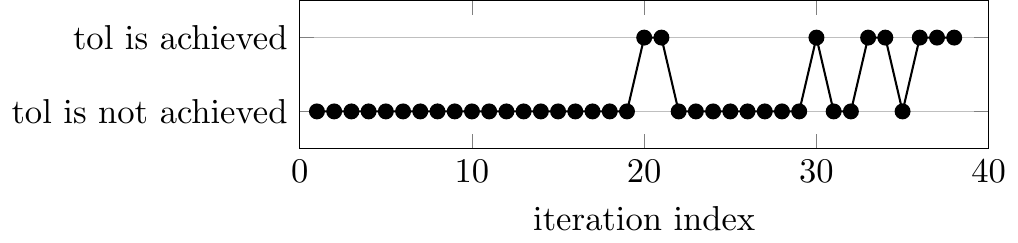}
    \caption{Evolution of the greedy termination flag (based on the 1-point look-ahead estimator) for the \texttt{tline} example with depth-3 memory. Note: the desired $\textup{tol}$ is achieved also at all iterations past the $38$\textsuperscript{th} one.}
    \label{fig:tline_f}
\end{figure}

We consider the impedance formulation of a 2-port electrical circuit modeling a transmission line. The resulting descriptor system has sizes $n=256$ and $p=m=2$. We look at a range of imaginary frequencies $z\in[10^7, 10^{15}]\imath$. The system response is shown in \cref{fig:tline} (top). We can observe strong resonating dynamics in the frequency band $[10^{10},10^{12}]\imath$. To approximate $H$, we apply our greedy method, using the simple look-ahead criterion as termination condition.

The greedy algorithm converges in 20 iterations. We display the corresponding surrogate $\widetilde{H}$ in \cref{fig:tline} (top) and the adaptively chosen sample points in \cref{fig:tline} (middle). We can see that the approximation quality is rather poor for some frequencies: $\widetilde{H}$ fails to identify and approximate several poles of $H$. We can understand the reason for this in \cref{fig:tline} (bottom), where we display both the exact error and its estimate $\eta_{\textup{LA}}$: the error is grossly underestimated over a substantial portion of the frequency range. From a theoretical viewpoint, this appears to be due to the high concentration of resonating frequencies around $z=3\cdot 10^{11}\imath$, which makes the factor $\Delta$ in \cref{eq:resequivH} vary across several orders of magnitude. In our specific experiment, the error estimator fails to identify the magnitude of such factor. For this reason, the maximum of the error estimator is not close to the maximum of the actual error, resulting in an early termination of the greedy sampling.

In \cref{sec:term}, we presented several ways of counteracting this issue. Here, we choose the simplest one: introducing a memory term. Specifically, we set $N_{\textup{memory}}=3$ in \cref{algomemory}. With this modification, the algorithm stops after $38$ iterations instead of just $20$.

The corresponding results are in \cref{fig:tline_x}. In \cref{fig:tline_x} (bottom) we see that, again, the error is underestimated over much of the frequency range. However, the addition of the memory term artificially increased the number of greedy iterations. Thanks to this effect, the approximation error is now uniformly below the tolerance.

Also, in \cref{fig:tline_f} we display the ``truth value'' of the termination criterion (i.e., whether \cref{eq:termsingle} is true or false) as the greedy iterations proceed. We can see that, despite a few short ``false-negative'' streaks, the estimator seems to be able to recognize and correct its own mistakes.

In this benchmark, we note that adaptivity is very effective at identifying the sub-range of frequencies where the response varies the most. If we had employed uniform or log-uniform sampling, which are the golden standards for VF and AAA, we estimate that at least $500$ samples would have been necessary to attain the prescribed accuracy.

\subsection{Example \#3: \texttt{iss}}
Lastly, we test our algorithm on a discrete frequency-domain structural response model of a module of the international space station. The resulting system has sizes $n=270$ and $p=m=3$. We look at a range of imaginary frequencies $z\in[0.1, 50]\imath$. The system response is shown in \cref{fig:iss} (top). We can observe that this is essentially a more complicated version of the \texttt{tline} example: the resonating frequencies are more, and the resonating dynamics have less uniform strengths.

We apply our method with look-ahead error estimation, with ``memory depth'' $N_{\textup{memory}}=3$. The algorithm carries out $100$ greedy iterations before terminating. We show the corresponding results in \cref{fig:iss} (bottom). As in the previous case, the error is underestimated at many frequencies.

As a way to improve the quality of the estimator, we test the ``batch error estimation'' idea from \cref{sec:term}: at each greedy iteration, we compute the exact error at the locations $z'_1,\ldots,z'_N$ of the $N=5$ largest local maxima of the error indicator $\dabs{Q}^{-1}$. Then we take the maximum of such $5$ errors and we compare it with $\textup{tol}$ to determine whether to continue or not.

This criterion is more strict than the ``1-point'' one used before. As a result, $12$ extra greedy iterations are carried out. The corresponding error estimator $\eta_{\textup{BLA}}$, defined as in \cref{eq:etaR}, is shown in \cref{fig:issb}. We can observe that the error bound is much more reliable. The resulting approximation error is almost uniformly below the tolerance. Note, however, that, compared to the simple ``1-point'' method, the ``batch'' method had to compute approximately $(N-1)S$ extra high-fidelity samples for testing purposes, effectively increasing the offline training cost by a factor $N$.

\begin{figure}[tb]
	\centering
	\includegraphics{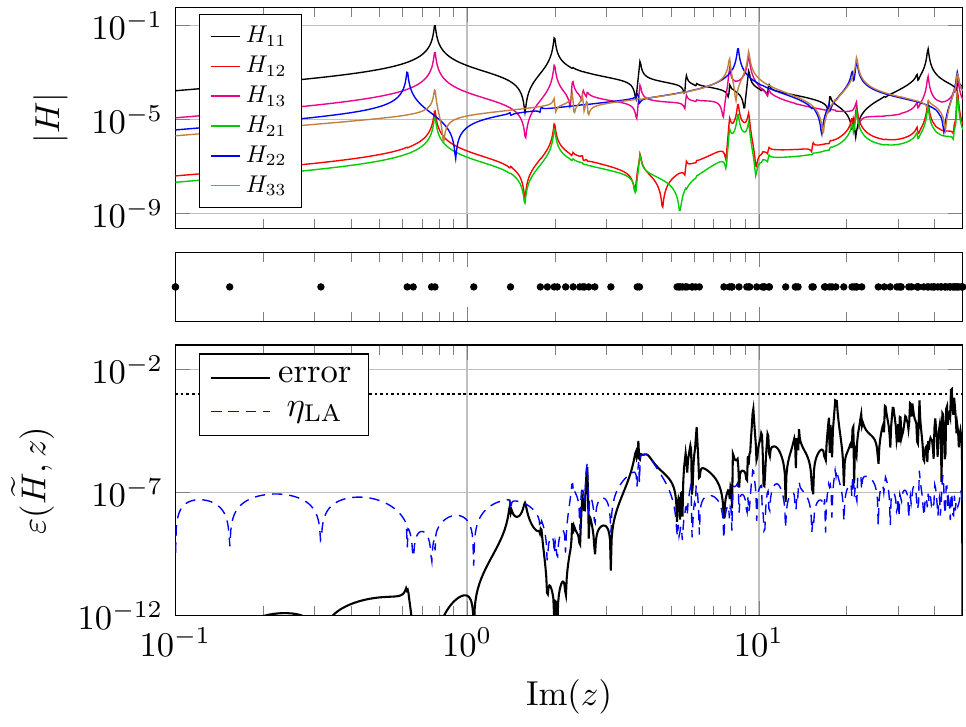}
    \caption{Results for the \texttt{iss} example. Top plot: some entries of the exact transfer function $H$. Middle plot: adaptively sampled frequencies. Bottom plot: relative approximation error and error estimator.}
    \label{fig:iss}
\end{figure}

\begin{figure}[tb]
	\centering
	\includegraphics{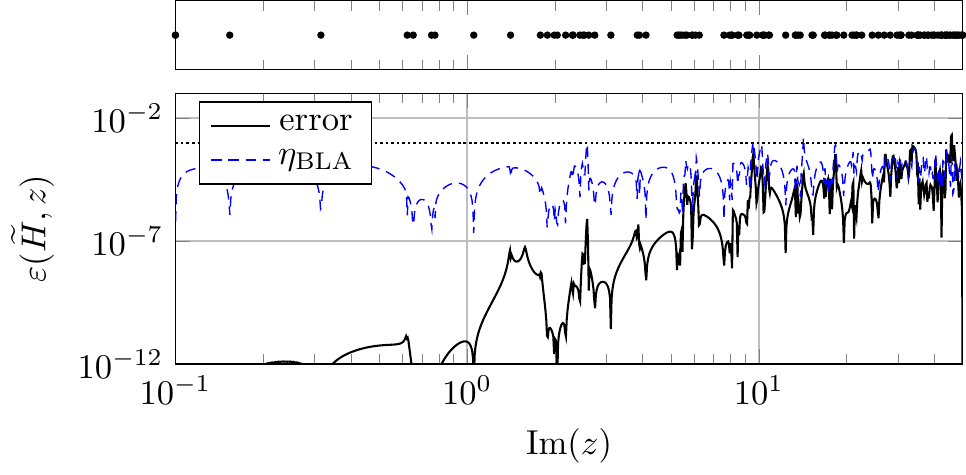}
    \caption{Results for the \texttt{iss} example with batch error estimation. Top plot: adaptively sampled frequencies. Bottom plot: relative approximation error and error estimator.}
    \label{fig:issb}
\end{figure}

As a third alternative, we also consider the ``randomized error estimation'' idea, using $N=100$ test frequencies, log-uniformly spaced over the frequency range. Similarly to the original 1-point indicator, the greedy algorithm terminates too early (this time after 87 iterations), with the error being underestimated by the estimator $\eta_{\textup{R}}$, cf.~\cref{eq:etaR}. A larger value of $N$ might improve the reliability of the randomized estimator, but, of course, this information would not have been available \emph{a priori}.

Finally, we compare the three error estimation strategies, by looking at the behavior of the estimators $\eta_{\textup{LA}}$, $\eta_{\textup{BLA}}$, and $\eta_{\textup{R}}$ as the greedy iterations proceed. The results are shown in \cref{fig:iss_c}. We can observe that the look-ahead estimators $\eta_{\textup{LA}}$ and $\eta_{\textup{BLA}}$ often coincide, since the single test frequency used for $\eta_{\textup{LA}}$ in \cref{eq:termsingle} belongs to the batch of test frequencies used for $\eta_{\textup{BLA}}$ in \cref{eq:termbatch}. Notably, both look-ahead estimators appear to oscillate as the iterations proceed. Ultimately, the oscillations are due to the fact that the indicators rely on evaluations of the error at test frequencies that vary at each iteration. The oscillations are particularly severe for the 1-point estimator, which also has several ``downward spikes'' below the tolerance level, symptoms of the underestimation of the error magnitude.

On the other hand, the randomized estimator $\eta_{\textup{R}}$ seems to be more stable and less oscillatory. This different behavior is justified by the fact that $\eta_{\textup{R}}$ relies on relative errors computed at test frequencies fixed in advance, which do not change throughout the greedy iterations. From this point of view, we can expect the randomized estimator to achieve a higher robustness, compared to the look-ahead ones. Still, this increased robustness of $\eta_{\textup{R}}$ does not necessarily imply reliability, as evidenced by the greedy algorithm terminating too early due to underestimated errors.

\begin{figure}[tb]
	\centering
	\includegraphics{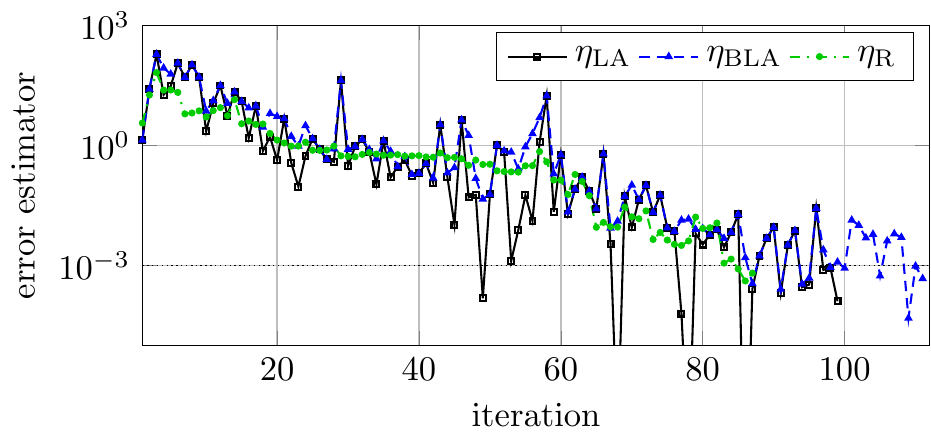}
    \caption{Evolution of the error estimator for the \texttt{iss} example.}
    \label{fig:iss_c}
\end{figure}

\section{Discussion and conclusions}\label{sec:conclusion}
In this work, we have proposed a non-intrusive error indicator for the rational approximation of frequency responses. The indicator is based on the denominator of the rational surrogate in barycentric form, and allows building rational approximations of transfer functions in an incremental greedy fashion, even in non-intrusive settings. The issue of choosing the greedy termination criterion was also discussed, and several approaches were presented for this purpose.

On one hand, our contribution can be compared to other ``adaptive'' approaches in the standard sense, like AAA. With respect to these methods, ours has the great advantage of not requiring an initial fine sampling. This leads to a lower cost in the training phase, making our method superior especially when evaluating the frequency response is expensive. On the other hand, our error indicator can be supported by a partial theory. This is an improvement over comparable state-of-the-art heuristic error indicators, like those proposed in \cite{gLF1,gLF2,gLF3,gLF4}.

Note that we do not claim our adaptive method to build a rational surrogate having \emph{minimal} order, e.g., in the sense of \cite{opt1}, nor \emph{optimality} properties, e.g., in the sense of \cite{opt2}. In particular, the final number of sample points may be larger than strictly necessary to achieve the target tolerance. This is due to two effects: (i) our error indicator is not an estimator, so that each new sample point may be placed sub-optimally, compared to the actual maximum of the error, which is out of reach in a non-intrusive setting (and also in most intrusive ones); (ii) greedy approaches, even when using intrusive error \emph{estimators}, are only guaranteed to yield a \emph{sub-}optimal approximation \cite{greedy}. In our view, thanks to its generality, our approach has significance despite its sub-optimality, since optimality is often incompatible with useful algorithmic features like non-intrusiveness and sampling efficiency.

Concerning the value of $\delta$ in the definition of the adjusted relative error $\varepsilon$, we mention that our discussion generalizes to $\delta=0$, which corresponds to the usual relative approximation error. However, in this case, $\Delta$ may be unbounded near the zeros of $H$. Moreover, $\delta\to\infty$ is also allowed, which, by scaling $\textup{tol}\mapsto\delta\cdot\textup{tol}$, corresponds to a greedy algorithm based on the \emph{absolute} approximation error as opposed to the relative one. In this case, $\Delta$ may be unbounded near the poles of $H$.

In the scope of improving the stability and reliability of our error estimator, future research directions include: (i) a more careful study and/or estimation of the $z$-dependent $\Delta$ factor in the error bound, (ii) the development of more reliable termination criteria, possibly improving the ``batch'' approach, and (iii) a comparison of our proposed method with the above-cited state-of-the-art alternatives, in terms of performance, stability, and effectiveness.

Finally, we mention the possibility of applying the proposed greedy method within the context of \emph{parametric} problems, for instance by following the non-intrusive parametric MOR framework introduced in \cite{pMRI}.

\backmatter

\bmhead{Acknowledgments} The author is grateful to Fabio Nobile for valuable discussions.

\bibliography{bibliography}

\end{document}